\DeclareFontFamily{OML}{cyr}{} \DeclareFontShape{OML}{cyr}{m}{n}{
  <5> <6> <7> <8> <9> gen * wncyr
  <10> <10.95> <12> <14.4> <17.28> <20.74> <24.88> wncyr10
  }{}
\DeclareSymbolFont{rusletters}{OML}{cyr}{m}{n}
\DeclareSymbolFontAlphabet{\rusmath}{rusletters}
\DeclareMathSymbol\re{\rusmath}{rusletters}{"03}
\newtheorem{theorem}{Theorem}[section]
\newtheorem{proposition}[theorem]{Proposition}
\newtheorem{lemma}[theorem]{Lemma}
\newcommand*{\im}{\mathop{\rm Im}\nolimits}
\newcommand*{\id}{\mathop{\rm id}\nolimits} 
\newcommand*{\A}{\EuScript A} 
\newcommand*{\K}{\EuScript K} 
\title[On the obstruction to integrability]{On the obstruction to integrability of almost-complex structures}
\author[V.A. Yumaguzhin]{Valeriy A. Yumaguzhin}
\date{11 January 2008}
\address{Program Systems Institute of RAS, Pereslavl'-Zalesskiy, 152020, Russia}
\email{yuma@diffiety.botik.ru}
\keywords{Almost-complex structure, differential invariant, equivalence problem, integrability problem, Nijenhuise tensor}
\subjclass{53C15, 53A55, 53C10}
\begin{document}
\begin{abstract} 
The natural bundle $\pi:E\to M$ of almost-complex structures is considered. 
The action of the pseudogroup of all diffeomorphisms of $M$ on the total space
$E$ is investigated. A nontrivial 1-st order differential   invariant of this 
action is constructed. It is proved that the Nijenhuise tensor of an almost-complex 
structures is equal to zero iff the constructed invariant for this structure is zero.
\end{abstract}
\maketitle
%
\section{Introduction}
It is well known that structure functions of $G$--structures and their prolongations are differential invariants of corresponding geometric structures, see \cite{Strnbrg}. However it is inconvenient to use these invariants in some cases to investigate the equivalence problem of geometric structures. Very often it is more suitable to make use of differential invariants defined in jet bundles of the natural bundle of a considering geometric structure. Therefore it seems natural to construct differential invariants like structure functions of $G$--structures directly on jet bundles of natural bundles of geometric structures. We do it by example of the natural bundle of ordinary differential equations $y''=a^3(x,y)y'\,^3+a^2(x,y)y'\,^2+a^1(x,y)y'+a^0(x,y)$ in \cite{Yum1} and \cite{Yum2}. In this paper, we construct a differential invariant like the structure function of $G$--structure directly on the 1-jet bundle of the natural bundle of almost--complex structures. 

By $\pi:E\to M$ we denote the natural bundle of all almost-complex structures on the manifold $M$ and by $\Gamma$ we denote the pseudogroup of all diffeomorphisms of $M$. Every diffeomorphism $f\in\Gamma$ is lifted in the natural way to the diffeomorphism $f^{(k)}$ of the bundle $J^k\pi$ of all $k$--jets of all sections of $\pi$, $k=0,1,2,\ldots\;$. Thus the pseudogroup $\Gamma$ acts by its lifted diffeomorphisms on every $J^k\pi$. Invariants of this action are  differential invariants (of order $k$) of the action of $\Gamma$ on $\pi$. 

The constructed invariant is a differential invariant of 1-st order of this action. It is the function $\chi$ on $J^1\pi$ with values in Spencer cohomology groups:
$$
  \chi: j^1_pS\longmapsto\chi(j^1_pS)\in H^{0,2}(g_{S(p)})\quad\forall\,j^1_pS\in J^1\pi \,,
$$
where $S$ is a section of $\pi$, $j^1_pS$ is its 1--jet at $p\in M$, and $H^{0,2}(g_{S(p)})$ is the Spencer cohomology group generated by the isotropy algebra $g_{S(p)}$ of the almost--complex structure $S(p)$ on the tangent spase $T_pM$ to $M$ at $p$.

Further, we choose the natural representative $\omega_{j^1_pS}$ in every class $\chi(j^1_pS)$. As a result, we obtain the differential invariant 
$$
  \omega:j^1_pS\longmapsto \omega_{j^1_pS}\in T_pM\otimes(\wedge^2T_p^*M)\,.
$$

Every section $S$ of $\pi$ generates the section $j_1S:p\mapsto j_p^1S\,,\,\forall\,p\in M$ of the bundle $J^1\pi$.
The restriction $\omega |_{j_1S}$ of $\omega$ to the image of $j_1S$ can be considered as an invariant vector--valued 2--form on $M$. We prove that the Nijenhuise tensor of the almost-complex structures $S$ is equal to zero iff the $\omega |_{j_1S}$ is zero. 
\smallskip

All manifolds and maps are smooth in this work. By $j_p^kf$ denote the $k$--jet of the map $f$ at the point $p$, $k=0,1,2,\ldots$. We assume summation over repeated indexes in all formulas.

\section{The natural bundle of almost-complex structures}
\subsection{The bundle of tensors of type $(1,1)$}
Let $M$ be a $2n$ -- dimensional smooth manifold. Consider the natural 
bundle 
$$
  \tau:TM\otimes T^*M\longrightarrow M
$$ 
of all tensors of type $(1,\,1)$ over $M$. By 
$$
  \tau_k:J^k\tau\longrightarrow M
$$ 
denote the bundle of $k$--jets of sections of $\tau$, $k=1,2,\ldots$\, A local chart $\bigl(U,\,(x^1,\ldots,x^{2n})\bigr)$ of $M$ generates in the obvious way the local chart in $J^k\tau$
$\bigl(\tau^{-1}_k(U),\,(x^i,\,u^i_j,\,u^i_{j,j_1},\,\ldots,\,u^i_{j,j_1\ldots j_k})\bigr)$, where $i,j,j_1,\ldots,j_k=1,\ldots,2n$. We say that the coordinates of this chart are {\it standard coordinates of} $J^k\tau$.

Let $f$ be a diffeomorphism of $M$. It is naturally lifted to the diffeomorphism $f^{(0)}$ of the total space of $\tau$. In a standard coordinates, $f^{(0)}$ is described in the following way. Suppose $f$ is described in coordinates of $M$ by the equations  
$$
  \tilde x^i=f^i(x^1,\ldots,x^{2n})\,.
$$ 
Then $f^{(0)}$ is described in the standard coordinates by the formulas
\begin{equation}\label{Lft0D}
 \begin{aligned}
  \tilde x^i&=f^i(x^1,\ldots,x^{2n})\,,\\
  \tilde u^i_j&=f^i_r(x)\,u^r_s\,g^s_j\bigl(f(x)\bigr)\,,
 \end{aligned} 
\end{equation}
where $x=(x^1,\ldots,x^{2n})$, $f^i_r=\partial f^i/\partial x^r$, $g=f^{-1}$, and $g^s_j=\partial g^i/\partial\tilde x^j$. It is clear that the following diagram is commutative 
$$
  \begin{CD}
    TM\otimes T^*M @>f^{(0)}>> TM\otimes T^*M\\
    @V\tau VV      @VV\tau V\\
    M       @>>f>  M\,.
  \end{CD}
$$

Every diffeomorphism $f$ of $M$ is lifted in the natural way to the diffeomorphism $f^{(k)}$ of $J^k\tau$. This lifted diffeomorphism is defined by the formula
$$
  f^{(k)}(\,j^k_pS\,)=j^k_{f(p)}\bigl(\,f^{(0)}\circ S\circ f^{-1}\,\bigr)\,,
$$
where $S$ is a section of $\tau$, $j^k_pS$ is its $k$--jet at the point $p\in M$. 

The natural projection 
$$
  \tau_{l,m}:J^l\tau\longrightarrow J^m\tau\,,\quad l>m\geq 0\,,
$$ 
is defined by $\tau_{l,m}(j_p^lS)=j^m_pS$, $\forall\,j_p^lS\in J^l\pi$.
Obviously, the diagram
$$
  \begin{CD}
    J^l\tau         @>f^{(l)}>> J^l\tau\\
    @V\tau_{l,\;m}VV             @VV\tau_{l,\;m} V\\
    J^m\tau         @>>f^{(m)}> J^m\tau
  \end{CD}
$$
is commutative. 

Suppose $f$ and $g$ are diffeomorphisms of $M$, then obviously,
$$
  (f\circ g)^{(k)}=f^{(k)}\circ g^{(k)},\;\;k=0,1,\ldots
$$

Let $X$ be a vector field in $M$ and let $f_t$ be its flow. Then the lifted flow $f_t^{(0)}$ defines the vector field $X^{(0)}$ in the total space of $\tau$, which is called {\it the lifting of $X$ to the total space of $\tau$}. It is clear that
$$
 \tau_*\bigl(\,X^{(0)}\,\bigr)=X\,.
$$
Suppose $X$ is described in coordinates of $M$ by the equation
$$
  X=X^1(x)\frac{\partial}{\partial x^1}+\ldots +X^{2n}(x)\frac{\partial}{\partial x^{2n}}\,.
$$
Then it follows from \eqref{Lft0D} that $X^{(0)}$ is described in the standard coordinates $x^i, u^i_j$ by the formula
\begin{equation}\label{Lft0VF}
  X^{(0)}=X^i(x)\frac{\partial}{\partial x^i}+\bigl(X^i_s(x)\,u^s_j-u^i_s\,X^s_j(x)\bigr)\frac{\partial}{\partial u^i_j}\,,
\end{equation}
where $X^i_j=\partial X^i/\partial x^j$.

\subsection{The bundle of almost-complex structures}
Recall that a section $S$ of $\tau$ is called an almost-complex structures on $M$ if it satisfies to the equation 
\begin{equation}\label{AlmCmplStr}
  S(p)^2=-\id_{\,T_pM},\quad\forall\,p\in M
\end{equation}
where $\id_{\,T_pM}$ is the identity map of the tangent space $T_pM$ to itself. The equation \eqref{AlmCmplStr} defines the subbundle 
$$
  \pi=\tau|_E:E\longrightarrow M
$$ 
of the bundle $\tau$, where 
$$
  E=\{\,\theta\in TM\otimes T^*M\,|\,\theta^2=-\id_{\,T_pM},\; p=\tau(\theta)\,\}\,.
$$
It is clear that the set of all section of $\pi$ is the set of all almost-complex structures on $M$.

It is easy to check that for every diffeomorphism $f$ of $M$, its lifted diffeomorphism $f^{(0)}$ transforms the total space $E$ of $\pi$ to itself. Hence every diffeomorphism $f$ of $M$ is naturally lifted to diffeomorphism $f^{(0)}|_E$ of the total space of $\pi$. Thus $\pi$ is a natural bundle. 

It is clear that for every vector field $X$ in $M$, its lifted vector field $X^{(0)}$ is tangent to the submanifold $E$.

By 
$$
  \pi_k:J^k\pi\longrightarrow M
$$ 
denote the bundle of $k$--jets of sections of $\pi$, $k=1,2,\ldots$\,. The natural projection $\pi_{1,0}:J^1\pi\to E$ is defined by $\pi_{1,0}(j_p^1S)=S(p)$, $\forall\,j_p^1S\in J^1\pi$.

It follows from \eqref{AlmCmplStr} that $J^1\pi$ 
is described as a submanifold of $J^1\tau$ in standard coordinates by the equations
\begin{equation}\label{J1pi}
 \begin{aligned}
  &u^i_ru^r_j=-\delta^i_j\,,\\
  &u^i_{r,k}u^r_j+u^i_ru^r_{j,k}=0\,,
 \end{aligned} 
\end{equation}
where $\delta^i_j$ is the Kronecker delta.

By $\Gamma$ we denote the pseudogroup of all diffeomorphisms of $M$. 

It is clear that for every $f\in\Gamma$, its lifted diffeomorphism $f^{(k)}$ transforms submanifold $J^k\pi\subset J^k\tau$ to itself. 
We will denote the restriction $f^{(k)}|_{J^k\pi}$ by $f^{(k)}$, $k=0,1,2,\ldots\;$. 

Thus the pseudogroup $\Gamma$ acts on every $J^k\pi$ by the lifted transformations. Invariants of this action are differential invariants (of order $k$) of the action of $\Gamma$ on $\pi$. 

\section{Isotropy algebras and spaces ${\bf \A_{\theta_1} }$}
%
\subsection{Isotropy algebras}
Let $X$ be a vector field in $M$ and let $p$ be a point of the domain of $X$. Then it follows from \eqref{Lft0VF} that for every $\theta_0\in\pi^{-1}(p)$, the value $X^{(0)}_{\theta_0}$ of the lifted vector field $X^{(0)}$ at $\theta_0$ is defined by the 1--jet $j_p^1X$ of $X$ at $p$.

Let $\theta_0\in J^0\pi$ and $p=\pi(\theta_0)$. Consider all vector fields $X$ on $M$ passing through $p$. The isotropy algebra of the point $\theta_0$ is defined by the formula 
\begin{equation}\label{IztrpAlg0}
  g_{\theta_0}=\bigl\{\,j^1_pX\,\bigr|\,X^{(0)}_{\theta_0}=0\,\bigr\}\,.
\end{equation}
From this definition, we have that if $j^1_pX\in g_{\theta_0}$, then $X_p=0$.
Suppose $\theta_0=(x^i,\, u^i_j)$ in standard coordinates and $j^1_pX=(X^i,\, X^i_j)$ in the coordinates $x^1,\,\ldots,\,x^{2n}$ in $M$. Then it follows from \eqref{Lft0VF} that $(X^i,\, X^i_j)\in g_{\theta_0}$ iff the components $X^q$ and $X^i_j$ satisfy the equations
\begin{align}
  &X^i=0\,,\notag\\
  &X^i_ru^r_j-u^i_rX^r_j=0\label{IstrAlg0}\,.
\end{align}
Let us move $\theta_0$ by an appropriate lifted diffeomorphism $f^{(0)}$ to $\tilde\theta_0\in\pi^{-1}(p)$ such that $\bigl(u^i_j(\tilde\theta_0)\bigr)=\begin{pmatrix}0&-I\\I& 0\end{pmatrix}$, where $I$ is the identity $n\times n$--matrix. From \eqref{IstrAlg0}, we get that the algebra $g_{\tilde\theta_0}$ consists of all matrix of the form
$\begin{pmatrix}A & B\\-B & A\end{pmatrix}$, where $A$ and $B$ are an arbitrary $n\times n$--matrixes. Hence $\dim g_{\tilde\theta_0}=2n^2$. It implies that
$$
  \dim g_{\theta_0}=2n^2\,.
$$ 
The algebra $g_{\theta_0}$ can be considered as a subspace of $T_pM\otimes T_p^*M$. Indeed, let $\xi: TM\to M$ be the tangent bundle of $M$, $\xi_1: J^1\xi\to M$ its bundle of 1--jets of sections of $\xi$, and $\xi_{1,0}:J^1\xi\to TM$ the natural projection defined by $\xi_{1,0}:j_p^1X\mapsto X_p$, $\forall\,j_p^1X\in J^1\xi$. Consider the well known exact sequence
$$
  0\xrightarrow{} T_pM\otimes T_p^*M\xrightarrow{\mu} J^1_p\xi\xrightarrow{\xi_{1,0}} T_pM\xrightarrow{} 0\,,
$$
where $\mu$ is the linear map defined for decomposable elements by the formula $\mu(X_p\otimes d\varphi)=j^1_p(\varphi X)$, $X$ is a vector field on $M$, and $\varphi$ is a smooth function on $M$ such that $\varphi(p)=0$. From this sequence, we get that $g_{\theta_0}\subset \ker\xi_{1,0}\cong T_pM\otimes T_p^*M$. It implies $g_{\theta_0}\subset T_pM\otimes T_p^*M$. 

Recall that the subspase of $T_pM\otimes(T_p^*M\odot T_p^*M)$ defined by
$$
  g_{\theta_0}^{(1)}=(g_{\theta_0}\otimes T_p^*M)\cap\bigl(T_pM\otimes(T_p^*M\odot T_p^*M)\bigr)
$$
is called the 1-st prolongation of $g_{\theta_0}$.
The following Spencer complex is connected in the natural way with the algebra $g_{\theta_0}$
\begin{equation}\label{SpncrCmplx}
  0\xrightarrow{} g_{\theta_0}^{(1)}\hookrightarrow g_{\theta_0}\otimes T_p^*M 
  \xrightarrow{\partial_{1,1}}T_pM\otimes(\wedge^2 T_p^*M)\xrightarrow{} 0\,,
\end{equation}
here $\partial_{1,1}(h)(X_p,Y_p)=h(X_p)(Y_p)-h(Y_p)(X_p)$, $\forall\,h\in g_{\theta_0}\otimes T_p^*M$, $\forall\,X_p,Y_p\in T_pM$. By $H^{0,2}(g_{\theta_0})$ we denote the cohomology group of this complex at the term $T_pM\otimes\wedge^2 T_p^*M$. 
\begin{proposition} 
 \begin{enumerate}
  \item If $n=1$, then $H^{0,2}(g_{\theta_0})$ is trivial.
  \item If $n\geq 2$, then $H^{0,2}(g_{\theta_0})$ is not trivial.
 \end{enumerate} 
\end{proposition}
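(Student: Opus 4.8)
The plan is to turn the statement into a dimension count for the complex \eqref{SpncrCmplx} and then to compute the first prolongation $g_{\theta_0}^{(1)}$ explicitly. First I would note that all the algebras $g_{\theta_0}$, $\theta_0\in J^0\pi$, are conjugate in $\mathfrak{gl}(2n,\mathbb R)$ by the linear fibre part of a lifted diffeomorphism --- this is exactly the passage to the normal form $\tilde\theta_0$ carried out above --- and that both the prolongation and the cohomology $H^{0,2}$ are isomorphic for conjugate $g_{\theta_0}\subset T_pM\otimes T_p^*M$. So it suffices to work with the model in which $g_{\theta_0}$ is the algebra of real $2n\times 2n$ matrices commuting with the standard complex structure $J$, i.e. $\mathfrak{gl}(n,\mathbb C)$ acting on $T_pM\cong\mathbb C^n$, with $\dim_{\mathbb R}g_{\theta_0}=2n^2$.

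Next I would observe that $\partial_{1,1}$ is, in effect, $\id_{T_pM}$ tensored with the antisymmetrization map $T_p^*M\otimes T_p^*M\to\wedge^2T_p^*M$, so its kernel is $(g_{\theta_0}\otimes T_p^*M)\cap\bigl(T_pM\otimes(T_p^*M\odot T_p^*M)\bigr)=g_{\theta_0}^{(1)}$. Hence \eqref{SpncrCmplx} is exact except possibly at its last term, and $H^{0,2}(g_{\theta_0})=\mathrm{coker}\,\partial_{1,1}$. By rank--nullity,
\begin{equation*}
  \dim H^{0,2}(g_{\theta_0})=\dim\bigl(T_pM\otimes\wedge^2T_p^*M\bigr)-\dim\bigl(g_{\theta_0}\otimes T_p^*M\bigr)+\dim g_{\theta_0}^{(1)}=\dim g_{\theta_0}^{(1)}-2n^2,
\end{equation*}
since $\dim\bigl(T_pM\otimes\wedge^2T_p^*M\bigr)=2n^2(2n-1)$ and $\dim\bigl(g_{\theta_0}\otimes T_p^*M\bigr)=4n^3$. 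So everything reduces to computing $\dim g_{\theta_0}^{(1)}$.

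For that, I would unwind the definition of the prolongation: $h\in g_{\theta_0}^{(1)}$ is a symmetric bilinear map $h\colon T_pM\times T_pM\to T_pM$ such that $h(X)\in g_{\theta_0}$ for every $X$, i.e. $h(X,JY)=Jh(X,Y)$; by symmetry this also forces $h(JX,Y)=h(X,JY)=Jh(X,Y)$, so $h$ is $\mathbb C$-linear in each slot when $T_pM$ is regarded as the complex space $\mathbb C^n$ with $J$ as multiplication by $i$. Thus $g_{\theta_0}^{(1)}$ is precisely the space of symmetric $\mathbb C$-bilinear maps $\mathbb C^n\times\mathbb C^n\to\mathbb C^n$, of complex dimension $n\binom{n+1}{2}=\tfrac12 n^2(n+1)$, whence $\dim_{\mathbb R}g_{\theta_0}^{(1)}=n^2(n+1)$. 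Substituting gives
\begin{equation*}
  \dim H^{0,2}(g_{\theta_0})=n^2(n+1)-2n^2=n^2(n-1),
\end{equation*}
which equals $0$ for $n=1$ and is strictly positive for $n\ge2$; this proves (1) and (2) respectively.

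The only real content here is the identification of $g_{\theta_0}^{(1)}$, and the key point --- that the single-slot commutation condition upgrades, via symmetry, to full $\mathbb C$-bilinearity --- is what makes that computation short. For part (2) one could alternatively bypass the exact dimension of the prolongation and simply exhibit one symmetric $\mathbb C$-bilinear map (equivalently, one $T_pM$-valued $2$-form) that does not lie in $\im\partial_{1,1}$; but the dimension count settles both statements uniformly and, as a bonus, pins down $\dim H^{0,2}(g_{\theta_0})=n^2(n-1)$.
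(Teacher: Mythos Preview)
Your argument is correct and shares the paper's overall framework---a dimension count in the Spencer complex \eqref{SpncrCmplx}, reducing the question to the size of $g_{\theta_0}^{(1)}$---but your treatment of the prolongation is sharper. The paper handles the two cases separately: for $n=1$ it simply asserts $\dim g_{\theta_0}^{(1)}=2$ by direct calculation, and for $n\ge 2$ it does not compute $\dim g_{\theta_0}^{(1)}$ at all but only bounds it from below by estimating the number of independent equations cutting it out inside $T_pM\otimes(\odot^2T_p^*M)$, enough to conclude $\dim\im\partial_{1,1}<\dim\bigl(T_pM\otimes\wedge^2T_p^*M\bigr)$. Your key observation---that the single-slot commutation condition $h(X,JY)=Jh(X,Y)$ together with the symmetry $h(X,Y)=h(Y,X)$ forces $\mathbb C$-linearity in \emph{both} slots---identifies $g_{\theta_0}^{(1)}$ exactly with the space of symmetric $\mathbb C$-bilinear maps $\mathbb C^n\times\mathbb C^n\to\mathbb C^n$, giving $\dim_{\mathbb R}g_{\theta_0}^{(1)}=n^2(n+1)$ and hence the precise value $\dim H^{0,2}(g_{\theta_0})=n^2(n-1)$. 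This settles both parts uniformly and yields strictly more information than the paper's inequality; the paper's route is shorter to state but leaves the actual dimension of the obstruction space undetermined.
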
  
\begin{proof} (1) It is easy to calculate that $\dim g_{\theta_0}^{(1)}=2$. Taking into account that $\dim g_{\theta_0}=2$, from \eqref{SpncrCmplx}, we get that $H^{0,2}(g_{\theta_0})=\{0\}$.

(2) It is clear that the number of linear independent equations in system \eqref{IstrAlg0} is $4n^2-2n^2=2n^2$. It follows that the number of linear independent equations in the system defining $g_{\theta_0}^{(1)}$ less or equal $4n^2$. The number of unknowns in this system is $4n^3+2n^2$. Thus $\dim g_{\theta_0}^{(1)}\geq 4n^3+2n^2-4n^2=4n^3-2n^2$. Taking into account that $\dim g_{\theta_0}\otimes T_p^*M=4n^3$, $\dim T_pM\otimes(\wedge^2 T_p^*M)=4n^3-2n^2$, from \eqref{SpncrCmplx}, we obtain that $\dim\im\partial_{1,1}\leq 2n^2<4n^3-2n^2$. This means that the cohomology group $H^{0,2}(g_{\theta_0})$ is not trivial.
\end{proof}

\subsection{Spaces ${\bf \A_{\theta_1}}$}
Let $\theta_1\in J^1\pi$, $p=\pi_1(\theta_1)$, and $S$ be a section of $\pi$ such that $j^1_pS=\theta_1$. 
By $\K_{\theta_1}$ we denote the tangent space to the image of the section $S$ at the point $\theta_0=S(p)$. Obviously, 
this space is independent of the choice of a section of $\pi$ realizing the jet $\theta_1$. This means that $\theta_1$ is identified in the natural way with the horizontal subspace $\K_{\theta_1}$. In standard coordinates, $\K_{\theta_1}$ is described in the following way. Suppose $\theta_1=(\,x^q,\, u^i_j,\, u^i_{j,k}\,)$. Then
$$
  \K_{\theta_1}=\langle\;\frac{\partial}{\partial x^1}+u^i_{j,1}\frac{\partial}{\partial u^i_j},\;\ldots,\;
  \frac{\partial}{\partial x^{2n}}+u^i_{j,2n}\frac{\partial}{\partial u^i_j}\;\rangle.
$$
Let $f$ be a (local) diffeomorphism of $M$. Then it is obvious that if $\theta_1$ belongs to the domain of $f^{(1)}$, then 
\begin{equation}\label{TrnsfrmK1}
  f^{(0)}_*(\K_{\theta_1})=\K_{f^{(1)}(\theta_1)}\,.
\end{equation}
Let $V_{\theta_0}$ be the tangent space to the fiber $\tau^{-1}(p)$ at the point $\theta_0$. Clearly that in standard coordinates, $V_{\theta_0}$ is spanned by all vectors $\partial /\partial u^i_j$, that is
$$
  V_{\theta_0}=\langle\;\frac{\partial}{\partial u^1_1},\;\ldots,\;\frac{\partial}{\partial u^{2n}_{2n}}\;\rangle.
$$   
Obviously, we have the following direct sum decomposition
$$
  T_{\theta_0}J^0\tau=\K_{\theta_1}\oplus V_{\theta_0}\,.
$$
Consider all vector fields $X$ in $M$ passing through the point $p$. Introduce a subspace $\A_{\theta_1}$ of vector space of 1--jets at $p$ of these vector fields by the formula
\begin{equation}\label{IstrpSpc}
  \A_{\theta_1}=\bigl\{\;j_p^1X\;\bigr|\;X^{(0)}_{\theta_0}\in\K_{\theta_1}\;\bigr\}\,.
\end{equation}
From \eqref{Lft0VF}, we get that $(X^q,\, X^i_j)\in\A_{\theta_1}$ iff the components $X^q$ and $X^i_j$ satisfy the equations
\begin{equation}\label{EqIstrpSpc}
  -u^i_{j,r}\,X^r+ X^i_r\,u^r_j-u^i_r\,X^r_j=0\,,
\end{equation}
where the coordinates $u^i_j$ and $u^i_{j,r}$ satisfy equations \eqref{J1pi}.

Let $f$ be a diffeomorphism of $M$ and let $p$ be a point of the domain of $f$. The tangent map 
$f_*:T_pM\to T_{f(p)}M$ generates the map 
$$
  j_p^2f:J_p^1\xi\longrightarrow J_{f(p)}^1\xi\,,\quad
  j_p^2f:j_p^1X\mapsto j_{f(p)}^1\bigl(f_*(X)\bigr)\,.
$$
\begin{proposition}\label{PrpstnTrnsfrmIstrpSpc} Let $\theta_1$ be a point of the domain of $f^{(1)}$. Then
\begin{equation}\label{TrnsfrmIstrpSpc} 
 j_p^2f(\A_{\theta_1})=\A_{f^{(1)}(\theta_1)}\,.
\end{equation}
\end{proposition}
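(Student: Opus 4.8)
The plan is to reduce the statement to two facts that are already available: the transformation rule \eqref{TrnsfrmK1} for the horizontal subspaces $\K$, and the naturality of the lifting of vector fields, i.e.\ the identity
$$
  f^{(0)}_*\bigl(X^{(0)}\bigr)=(f_*X)^{(0)}
$$
for every vector field $X$ on $M$ and every diffeomorphism $f$ of $M$.

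First I would establish this naturality identity. Let $f_t$ be the flow of $X$. Then $f\circ f_t\circ f^{-1}$ is the flow of $f_*X$, and applying the lifting construction together with the functoriality relation $(g\circ h)^{(0)}=g^{(0)}\circ h^{(0)}$ shows that the flow of $(f_*X)^{(0)}$ is $f^{(0)}\circ f_t^{(0)}\circ\bigl(f^{(0)}\bigr)^{-1}$; differentiating at $t=0$ gives exactly $f^{(0)}_*\bigl(X^{(0)}\bigr)$. (Alternatively, the same identity can be verified directly in the standard coordinates from \eqref{Lft0D}, \eqref{Lft0VF}, but the flow argument is cleaner.) In particular, for $\theta_0\in\pi^{-1}(p)$ one has $f^{(0)}_*\bigl(X^{(0)}_{\theta_0}\bigr)=(f_*X)^{(0)}_{f^{(0)}(\theta_0)}$.

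Now take $\theta_1$ in the domain of $f^{(1)}$, set $p=\pi_1(\theta_1)$, $\theta_0=\pi_{1,0}(\theta_1)$, and recall from the commutativity of the projection diagram that $\pi_{1,0}\bigl(f^{(1)}(\theta_1)\bigr)=f^{(0)}(\theta_0)$. Let $X$ be a vector field on $M$ through $p$. By the definition \eqref{IstrpSpc}, $j_p^1X\in\A_{\theta_1}$ iff $X^{(0)}_{\theta_0}\in\K_{\theta_1}$. Applying the linear isomorphism $f^{(0)}_*$ and using \eqref{TrnsfrmK1}, this is equivalent to $f^{(0)}_*\bigl(X^{(0)}_{\theta_0}\bigr)\in\K_{f^{(1)}(\theta_1)}$, hence by the naturality identity to $(f_*X)^{(0)}_{f^{(0)}(\theta_0)}\in\K_{f^{(1)}(\theta_1)}$. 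Since $X^{(0)}_{\theta_0}$ is determined by $j_p^1X$, the vector $(f_*X)^{(0)}_{f^{(0)}(\theta_0)}$ is determined by $j^1_{f(p)}(f_*X)=j_p^2f(j_p^1X)$, and the last membership is precisely the condition $j_p^2f(j_p^1X)\in\A_{f^{(1)}(\theta_1)}$. Thus $v\in\A_{\theta_1}$ iff $j_p^2f(v)\in\A_{f^{(1)}(\theta_1)}$ for every $v\in J_p^1\xi$; equivalently $\A_{\theta_1}=\bigl(j_p^2f\bigr)^{-1}\bigl(\A_{f^{(1)}(\theta_1)}\bigr)$, and applying the isomorphism $j_p^2f$ gives \eqref{TrnsfrmIstrpSpc}.

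The only genuine content is the naturality identity $f^{(0)}_*(X^{(0)})=(f_*X)^{(0)}$; once it is in hand, the rest is formal manipulation of the definitions together with \eqref{TrnsfrmK1} and the commutativity of the projection diagram. So I expect the verification of that identity (by the flow/functoriality argument, or by a coordinate computation with \eqref{Lft0VF}) to be the main, though not difficult, step.
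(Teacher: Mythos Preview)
Your argument is correct and follows the same route as the paper: the paper's proof also uses \eqref{TrnsfrmK1} together with the naturality identity $f^{(0)}_*\bigl(X^{(0)}_{\theta_0}\bigr)=(f_*X)^{(0)}_{f^{(0)}(\theta_0)}$ and the definition of $j_p^2f$. If anything, you are slightly more thorough: the paper simply invokes the naturality identity without justification and only writes out the inclusion $j_p^2f(\A_{\theta_1})\subset\A_{f^{(1)}(\theta_1)}$, whereas you supply the flow/functoriality argument for the identity and phrase the chain of equivalences as an ``iff'', thereby getting equality directly rather than appealing (implicitly) to the inverse diffeomorphism.
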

\begin{proof}
Let $j_p^1X\in\A_{\theta_1}$. This means that $X^{(0)}_{\theta_0}\in\K_{\theta_1}$, where $\theta_0=\pi_{1,0}(\theta_1)$. From \eqref{TrnsfrmK1}, we get  $f^{(0)}_*(X^{(0)}_{\theta_0})\in\K_{f^{(1)}(\theta_1)}$. From    $f^{(0)}_*(X^{(0)}_{\theta_0})=(f_*X)^{(0)}_{f^{(0)}(\theta_0)}$, we get  $j_{f(p)}^1\bigl(f_*(X)\bigr)\in\A_{f^{(1)}(\theta_1)}$. By the definition $j_p^2f(j_p^1X)=j_{f(p)}^1\bigl(f_*(X)\bigr)$. 
\end{proof}

\subsection{Horizontal subspaces}
From definitions \eqref{IztrpAlg0} and \eqref{IstrpSpc} we get 
\begin{equation}\label{IztrpAlgInIztrpSpc}
  g_{\theta_0}\subset\A_{\theta_1}.
\end{equation}
From definition of $\A_{\theta_1}$, we get that
\begin{equation}\label{PrjIztrpSpc}
  \xi_{1,0}(\A_{\theta_1})= T_pM\,.
\end{equation}
In addition, 
$$
  \ker\xi_{1,0}|_{\A_{\theta_1}}=g_{\theta_0}.
$$

We say that a 2n--dimensional subspace $H$ of $\A_{\theta_1}$ is {\it horizontal} if the natural projection
$$
  \xi_{1,0}\bigl|_H:H\longrightarrow T_pM,\quad\xi_{1,0}: j_p^1X\mapsto X_p,
$$
is an isomorphism. From \eqref{EqIstrpSpc}, we obtain that there are horizontal subspaces in the space $\A_{\theta_1}$.  

Let $H$ be a horizontal subspace of $\A_{\theta_1}$, then obviously
$$
  \A_{\theta_1}=H\oplus g_{\theta_0}\,.
$$

Any two horizontal subspaces $H$ and $\tilde H$ of $\A_{\theta_1}$ define the linear map
$$
  f_{H,\tilde H}:T_pM\to g_{\theta_0}\,,\quad
  f_{H,\tilde H}: X\mapsto (\xi_{1,0}|_H)^{-1}(X)
  -(\xi_{1,0}|_{\tilde H})^{-1}(X)\,.
$$
Let $H\subset\A_{\theta_1}$ be a horizontal subspace and let $f:T_pM\to g_{\theta_0}$ be a linear map. Then there exist a unique horizontal subspace $\tilde H\subset\A_{\theta_1}$ such that $f=f_{H,\tilde H}$. This subspace is spanned by the $1$--jets $(\xi_{1,0}|_H)^{-1}(X)-f(X)$, $X\in T_pM$.

Let $f$ be a diffeomorphism of $M$ such that $\theta_1$ belongs to the domain of $f^{(1)}$. Then obviously, we have 
\begin{proposition}\label{TrnsfrmHS} Let $H$ be a horizontal subspace of $\A_{\theta_{k+1}}$. Then $j_p^2f(H)$ is a   horizontal subspace of $\A_{f^{(1)}(\theta_1)}$.
\end{proposition}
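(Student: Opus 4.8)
The plan is to deduce the statement from Proposition~\ref{PrpstnTrnsfrmIstrpSpc} together with the compatibility of the projection $\xi_{1,0}$ with the map $j^2_pf$. (The subspace $H$ in the statement is a horizontal subspace of $\A_{\theta_1}$, which is the case to be treated.)

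First I would observe that $j^2_pf:J^1_p\xi\to J^1_{f(p)}\xi$ is a linear isomorphism. Indeed, by definition $j^2_pf(j^1_pX)=j^1_{f(p)}(f_*X)$, and since $f_*:T_pM\to T_{f(p)}M$ is a linear isomorphism, the induced map on $1$--jets of vector fields is linear and invertible, its inverse being $j^2_{f(p)}f^{-1}$. Hence $j^2_pf$ carries the $2n$--dimensional subspace $H$ onto a $2n$--dimensional subspace of $J^1_{f(p)}\xi$, and by Proposition~\ref{PrpstnTrnsfrmIstrpSpc} this image is contained in $\A_{f^{(1)}(\theta_1)}$. So it remains only to check horizontality.

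Next I would record the intertwining identity $\xi_{1,0}\circ j^2_pf=f_*\circ\xi_{1,0}$. For a vector field $X$ through $p$ this is the chain
$$
  \xi_{1,0}\bigl(j^2_pf(j^1_pX)\bigr)=\xi_{1,0}\bigl(j^1_{f(p)}(f_*X)\bigr)=(f_*X)_{f(p)}=f_*(X_p)=f_*\bigl(\xi_{1,0}(j^1_pX)\bigr).
$$
Restricting this identity to $H$ and writing $\phi=j^2_pf|_H:H\to j^2_pf(H)$, a linear isomorphism onto its image, one gets $\xi_{1,0}|_{j^2_pf(H)}\circ\phi=f_*\circ(\xi_{1,0}|_H)$. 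Since $H$ is horizontal, $\xi_{1,0}|_H$ is an isomorphism; since $f$ is a diffeomorphism, $f_*$ is an isomorphism; hence the right--hand side is an isomorphism, and cancelling $\phi$ shows that $\xi_{1,0}|_{j^2_pf(H)}:j^2_pf(H)\to T_{f(p)}M$ is an isomorphism. This is exactly the assertion that $j^2_pf(H)$ is a horizontal subspace of $\A_{f^{(1)}(\theta_1)}$.

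I do not expect any genuine obstacle: the substance has already been extracted in Proposition~\ref{PrpstnTrnsfrmIstrpSpc} and in the naturality of $\xi_{1,0}$ under lifts. The only point requiring a little care is bookkeeping of base points — $\xi_{1,0}|_H$ takes values in $T_pM$ while $\xi_{1,0}|_{j^2_pf(H)}$ takes values in $T_{f(p)}M$, and it is $f_*$ that links the two — together with noting that $j^2_pf$ is injective so that it does not collapse the dimension of $H$.
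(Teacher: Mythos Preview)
Your argument is correct and is exactly the kind of unwinding the paper has in mind: the paper gives no proof at all, merely prefacing the proposition with ``obviously, we have'', and your use of Proposition~\ref{PrpstnTrnsfrmIstrpSpc} together with the naturality identity $\xi_{1,0}\circ j^2_pf=f_*\circ\xi_{1,0}$ is precisely what makes it obvious.
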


\section{Differential invariants}
In this section, we find a differential invariant of order 1 of the action of $\Gamma$ on $\pi$. For $n=1$, this invariant is trivial; for $n>1$, this invariant is nontrivial. Finally, we compare the founded invariant with the Nijenhuise tensor.  

\subsection{The structure function}
Let $p\in M$. Consider the vector space $J_p^1\xi$ of 1--jets at $p$ of all vector fields in $M$ passing through $p$ and the bilinear map
$$
  [\,\cdot\,,\,\cdot\,]:J_p^1\xi\times J_p^1\xi\longrightarrow T_pM\,,\quad
  [j_p^1X, j_p^1Y]=[X, Y]_p\,,
$$ 
where $[X, Y]_p$ is the value at the point $p$ of the bracket of the vector fields $X$ and $Y$.  

Every horizontal subspace $H\subset\A_{\theta_1}$ generates the exterior 2--form $\omega_H$ on $T_pM$ with values in $T_pM$ by the formula 
\begin{equation}\label{OmgH}
  \omega_H(X_p, Y_p)=[\,(\xi_{1,0}\bigl|_H)^{-1}(X_p),\,(\xi_{1,0}\bigl|_H)^{-1}(Y_p)\,]\,,\quad
  \forall\,X_p, Y_p\in T_pM.
\end{equation}
Let $\theta_0=\pi_{1,0}(\theta_1)$. The form $\omega_H$ defines the element 
$$
  \chi(\theta_1)=\omega_H\,+\,\partial_{1,1}(g_{\theta_0}\otimes T_p^*M)\,.
$$ 
of the cohomology goup $H^{0,2}(g_{\theta_0})=T_pM\otimes\wedge^2 T_p^*M/\partial_{1,1}(g_{\theta_0}\otimes T_p^*M)$ (see \eqref{SpncrCmplx}).
\begin{proposition}
 The class $\chi(\theta_1)$ is independent of the choice of a horizontal subspace $H$ in $\A_{\theta_1}$. 
\end{proposition}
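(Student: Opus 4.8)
The plan is to show that replacing a horizontal subspace $H$ by another horizontal subspace $\tilde H$ changes the form $\omega_H$ only by an element of $\partial_{1,1}(g_{\theta_0}\otimes T_p^*M)$, so that the cohomology class $\chi(\theta_1)$ is unaltered. First I would invoke the description of horizontal subspaces established earlier: there is a linear map $f_{H,\tilde H}:T_pM\to g_{\theta_0}$ such that, writing $s=(\xi_{1,0}|_H)^{-1}$ and $\tilde s=(\xi_{1,0}|_{\tilde H})^{-1}$ for the two horizontal lifts, one has $\tilde s(X_p)=s(X_p)-f_{H,\tilde H}(X_p)$ for all $X_p\in T_pM$. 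Note that $f_{H,\tilde H}(X_p)\in g_{\theta_0}\subset J_p^1\xi$ is a $1$-jet of a vector field vanishing at $p$.

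Next I would compute $\omega_{\tilde H}(X_p,Y_p)$ by substituting this relation into the defining formula \eqref{OmgH}, expanding the bracket $[\tilde s(X_p),\tilde s(Y_p)]=[s(X_p)-f_{H,\tilde H}(X_p),\,s(Y_p)-f_{H,\tilde H}(Y_p)]$ bilinearly into four terms. The first term is $\omega_H(X_p,Y_p)$. The last term, $[f_{H,\tilde H}(X_p),f_{H,\tilde H}(Y_p)]$, is the bracket of two $1$-jets of vector fields both vanishing at $p$; since $[X,Y]_p$ depends only on the $1$-jets at $p$ and the bracket of two vector fields vanishing at $p$ vanishes at $p$ (the bracket in coordinates is $X^i\partial_i Y^j-Y^i\partial_i X^j$, which is zero at $p$ when $X_p=Y_p=0$), this term is zero. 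The two cross terms combine into $-[s(X_p),f_{H,\tilde H}(Y_p)]+[s(Y_p),f_{H,\tilde H}(X_p)]$.

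The key point is then to recognize the cross terms as a Spencer coboundary. Here I would use the standard identification from the exact sequence $0\to T_pM\otimes T_p^*M\xrightarrow{\mu}J_p^1\xi\xrightarrow{\xi_{1,0}}T_pM\to 0$, under which $g_{\theta_0}$ sits inside $T_pM\otimes T_p^*M$. For an element $h\in g_{\theta_0}$, viewed as a map $T_pM\to g_{\theta_0}$ via $X_p\mapsto$ the corresponding vector, and a $1$-jet $s(X_p)$ projecting to $X_p$, the bracket $[s(X_p),\mu(v\otimes d\varphi)]_p$ evaluates to $-\,d\varphi(X_p)\,v$ (again since $[X,\varphi Y]_p=X_p(\varphi)Y_p-\varphi(p)[X,Y]_p=d\varphi(X_p)v$ when $\varphi(p)=0$, with an overall sign from the order of the bracket); extending bilinearly, $[s(X_p),f_{H,\tilde H}(Y_p)]=f_{H,\tilde H}(Y_p)(X_p)$, where on the right $f_{H,\tilde H}(Y_p)\in g_{\theta_0}\subset T_pM\otimes T_p^*M$ is regarded as a linear map $T_pM\to T_pM$. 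Thus, setting $h\in g_{\theta_0}\otimes T_p^*M$ to be $h(X_p)=f_{H,\tilde H}(X_p)$, the cross terms become exactly $-h(Y_p)(X_p)+h(X_p)(Y_p)=\partial_{1,1}(h)(X_p,Y_p)$. Hence $\omega_{\tilde H}=\omega_H+\partial_{1,1}(h)$ with $h\in g_{\theta_0}\otimes T_p^*M$, so the classes coincide.

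The main obstacle will be getting the identification of the bracket terms with the Spencer differential exactly right — in particular keeping track of the sign conventions in \eqref{OmgH} and in $\partial_{1,1}$, verifying that $h$ as defined indeed lands in $g_{\theta_0}\otimes T_p^*M$ (which is immediate since $f_{H,\tilde H}$ takes values in $g_{\theta_0}$), and justifying that only the $1$-jet data enters the bracket so that passing between $1$-jets and their images under $\mu$ is legitimate. Once the bracket of a horizontal lift with an element of $\mu(T_pM\otimes T_p^*M)$ is shown to reduce to the algebraic evaluation map, the rest is a short bilinear expansion.
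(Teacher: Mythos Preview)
Your argument is correct and reaches the same conclusion as the paper, but by a different route. The paper's proof is a two-line coordinate computation: writing $H=\{(X^i,h^i_{j,r}X^r)\}$ and $\tilde H=\{(X^i,\tilde h^i_{j,r}X^r)\}$, one has $\omega_H(X_p,Y_p)^i=X^rY^s(h^i_{r,s}-h^i_{s,r})$, and since $(h^i_{j,r}-\tilde h^i_{j,r})\in g_{\theta_0}\otimes T_p^*M$ the difference $\omega_H-\omega_{\tilde H}$ is visibly $\partial_{1,1}$ of that element. You instead work invariantly, expanding the bracket $[\tilde s(X_p),\tilde s(Y_p)]$ bilinearly and identifying the cross terms with $\partial_{1,1}(f_{H,\tilde H})$ via the identification $g_{\theta_0}\subset T_pM\otimes T_p^*M$ coming from $\mu$. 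The underlying idea is identical---the difference of lifts lies in $g_{\theta_0}\otimes T_p^*M$ and antisymmetrizing it gives the change in $\omega_H$---but your version makes the mechanism transparent without choosing coordinates, at the cost of having to chase the bracket/$\mu$ identification carefully. (One cosmetic point: in your parenthetical you write $[s(X_p),\mu(v\otimes d\varphi)]_p=-d\varphi(X_p)v$ and then immediately compute it as $+d\varphi(X_p)v$; the latter is correct and is what you actually use, so the stray minus sign should be dropped.)
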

\begin{proof} Suppose $H$ and $\tilde H$ are horizontal subspaces of $\A_{\theta_1}$. Then in coordinates, we have
$H=\{\,(X^i,\,h^i_{j,r}X^r)\,\}$ and $\tilde H=\{\,(X^i,\,\tilde h^i_{j,r}X^r)\,\}$. Hence 
$\omega_H((X_p, Y_p))=X^rY^s(h^i_{r,s}-h^i_{s,r})$ and 
$\omega_{\tilde H}((X_p, Y_p))=X^rY^s(\tilde h^i_{r,s}-\tilde h^i_{s,r})$. Taking into account that 
$(h^i_{j,r}-\tilde h^i_{j,r})\in g_{\theta_0}\otimes T_p^*M$, we obtain that
$\omega_H-\omega_{\tilde H}\in \partial_{1,1}(g_{\theta_0}\otimes T_p^*M)$.
\end{proof}

Let $f$ be a point transformation of the base of $\pi$ such that $\theta_1$ belongs to the domain of $f^{(1)}$. From propositions \ref{PrpstnTrnsfrmIstrpSpc} and \ref{TrnsfrmHS}, we obviously get that 
\begin{equation}\label{TrnsfrmOmgH}  
 f_*\bigl(\omega_H(X_p, Y_p)\bigr)=\omega_{j_p^2f(H)}\bigl(f_*(X_p), f_*(Y_p)\bigr)
  \,,\quad\forall\,X_p, Y_p\in T_p\,.
\end{equation}
Hence the element $\chi(\theta_1)$ is defined by $\theta_1$ in the natural way. Thus we obtain
\begin{proposition}
 The field
 $$
   \chi:\theta_1\longmapsto\chi(\theta_1) 
 $$
 on $J^1\pi$ is a 1-st order differential invariant of the action of $\Gamma$ on $\pi$.
\end{proposition}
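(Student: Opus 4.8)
The plan is to verify the $\Gamma$-equivariance of the assignment $\theta_1\mapsto\chi(\theta_1)$; given the machinery already in place this is essentially a matter of assembling facts already proved. Fix $f\in\Gamma$ and a point $\theta_1\in J^1\pi$ in the domain of $f^{(1)}$, put $p=\pi_1(\theta_1)$, $\theta_0=\pi_{1,0}(\theta_1)$, and write $\tilde\theta_0=f^{(0)}(\theta_0)$, $\tilde\theta_1=f^{(1)}(\theta_1)$. The tangent map $f_*:T_pM\to T_{f(p)}M$, together with the induced maps on dual spaces and tensor products (all of which I will denote by $f_*$), identifies each term of the Spencer complex \eqref{SpncrCmplx} at $\theta_0$ with the corresponding term at $\tilde\theta_0$; the goal is to show this identification is a chain map carrying $\chi(\theta_1)$ to $\chi(\tilde\theta_1)$.

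First I would check that $f_*$ maps $g_{\theta_0}$ onto $g_{\tilde\theta_0}$. From the identity $f^{(0)}_*\bigl(X^{(0)}_{\theta_0}\bigr)=(f_*X)^{(0)}_{\tilde\theta_0}$ (already used in the proof of Proposition~\ref{PrpstnTrnsfrmIstrpSpc}), the condition $X^{(0)}_{\theta_0}=0$ is equivalent to $(f_*X)^{(0)}_{\tilde\theta_0}=0$; by the definition \eqref{IztrpAlg0} of the isotropy algebra this says precisely that $j^1_pX\mapsto j^1_{f(p)}(f_*X)$ maps $g_{\theta_0}$ isomorphically onto $g_{\tilde\theta_0}$, and under the embedding $g_{\theta_0}\subset T_pM\otimes T_p^*M$ this map is the restriction of $f_*$. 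Since $\partial_{1,1}$ in \eqref{SpncrCmplx} is given by the purely algebraic formula $\partial_{1,1}(h)(X_p,Y_p)=h(X_p)(Y_p)-h(Y_p)(X_p)$, it is natural in $(T_pM,T_p^*M)$, so the induced maps commute with $\partial_{1,1}$. Consequently $f_*$ carries $\partial_{1,1}(g_{\theta_0}\otimes T_p^*M)$ onto $\partial_{1,1}(g_{\tilde\theta_0}\otimes T_{f(p)}^*M)$ and descends to an isomorphism
$$
  \bar f_*:H^{0,2}(g_{\theta_0})\xrightarrow{\ \cong\ }H^{0,2}(g_{\tilde\theta_0})\,.
$$

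Next I would pick a horizontal subspace $H\subset\A_{\theta_1}$ (such exist by the remark after \eqref{EqIstrpSpc}). By Proposition~\ref{TrnsfrmHS}, $j_p^2f(H)$ is a horizontal subspace of $\A_{\tilde\theta_1}$, and by \eqref{TrnsfrmOmgH} the vector-valued $2$-form $\omega_H$ is intertwined with $\omega_{j_p^2f(H)}$ by $f_*$, i.e.\ $f_*\circ\omega_H=\omega_{j_p^2f(H)}\circ(f_*\times f_*)$ as elements of $T_{f(p)}M\otimes\wedge^2T_{f(p)}^*M$. Combining this with the previous paragraph gives $\bar f_*\bigl(\chi(\theta_1)\bigr)=\bar f_*\bigl([\omega_H]\bigr)=[\omega_{j_p^2f(H)}]=\chi(\tilde\theta_1)$. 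Because $\chi(\theta_1)$ is independent of the choice of $H$ (the preceding proposition), this is a well-defined assignment $\theta_1\mapsto\chi(\theta_1)$ on $J^1\pi$ that is equivariant under the lifted $\Gamma$-action, hence a first-order differential invariant of the action of $\Gamma$ on $\pi$; it is trivial for $n=1$ and nontrivial for $n\geq2$ by the computation of $H^{0,2}(g_{\theta_0})$ in Section~3.

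I do not expect a genuine obstacle: everything rests on the naturality of the construction, which has been prepared by Propositions~\ref{PrpstnTrnsfrmIstrpSpc} and \ref{TrnsfrmHS} and by \eqref{TrnsfrmOmgH}. The only point requiring a moment's care is the verification that the $f_*$-induced maps on $g_{\theta_0}\otimes T_p^*M$ and on $T_pM\otimes\wedge^2T_p^*M$ genuinely commute with $\partial_{1,1}$ — a one-line check from its definition — so that $\bar f_*$ on cohomology is well defined.
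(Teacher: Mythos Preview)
Your argument is correct and follows the same route as the paper: the paper derives \eqref{TrnsfrmOmgH} from Propositions~\ref{PrpstnTrnsfrmIstrpSpc} and~\ref{TrnsfrmHS} and then simply asserts that ``the element $\chi(\theta_1)$ is defined by $\theta_1$ in the natural way,'' leaving the passage to cohomology implicit. You have merely unpacked that passage---the naturality of $g_{\theta_0}$ under $f_*$ and the compatibility with $\partial_{1,1}$---which the paper takes as evident.
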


\subsection{The natural complement}
In this section for $n>1$, we find a natural complementation to the space $\partial_{1,1}(g_{\theta_0}\otimes T_p^*M)$ in the space $T_pM\otimes(\wedge^2T_p^*M)$. 

Let $\theta_0$ be a point of $J^0\pi=E\subset TM\otimes T^*M$ and $p=\pi(\theta_0)$. Then $\theta_0$ generates the linear map
$$
  \widehat{\theta}_0 :T_pM\otimes T_p^*M\longrightarrow T_pM\otimes T_p^*M\,,\quad 
  \widehat{\theta}_0:X\mapsto X\theta_0-\theta_0X\,.
$$
Obviously
$$
   \ker \widehat{\theta}_0=g_{\theta_0}\,. 
$$
\begin{lemma}
$$
  \ker \widehat{\theta}_0\cap\im \widehat{\theta}_0=\{0\}
$$ 
\end{lemma}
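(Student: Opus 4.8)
The plan is to use the single algebraic relation $\theta_0^2=-\id_{T_pM}$ to extract a universal polynomial identity for $\widehat{\theta}_0$, and then read off the lemma from it. Throughout I identify $T_pM\otimes T_p^*M$ with the space of endomorphisms of $T_pM$, so that $\widehat{\theta}_0(X)=X\theta_0-\theta_0X$. First I would compute $\widehat{\theta}_0^2$: expanding $(X\theta_0-\theta_0X)\theta_0-\theta_0(X\theta_0-\theta_0X)$ and substituting $\theta_0^2=-\id$ gives $\widehat{\theta}_0^2(X)=-2X-2\,\theta_0X\theta_0$. Applying $\widehat{\theta}_0$ once more, the only non-obvious term is $\widehat{\theta}_0(\theta_0X\theta_0)=\theta_0X\theta_0^2-\theta_0^2X\theta_0=X\theta_0-\theta_0X=\widehat{\theta}_0(X)$, so $\widehat{\theta}_0^3(X)=-2\,\widehat{\theta}_0(X)-2\,\widehat{\theta}_0(X)=-4\,\widehat{\theta}_0(X)$. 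Hence $\widehat{\theta}_0$ satisfies $\widehat{\theta}_0^3=-4\,\widehat{\theta}_0$.

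Given this identity the lemma follows in two lines. Let $X\in\ker\widehat{\theta}_0\cap\im\widehat{\theta}_0$ and choose $Y\in T_pM\otimes T_p^*M$ with $X=\widehat{\theta}_0(Y)$. Then $\widehat{\theta}_0^2(Y)=\widehat{\theta}_0(X)=0$; applying $\widehat{\theta}_0$ to this equality and using $\widehat{\theta}_0^3=-4\,\widehat{\theta}_0$ gives $0=\widehat{\theta}_0^3(Y)=-4\,\widehat{\theta}_0(Y)=-4X$, so $X=0$.

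It is worth recording the conceptual picture behind the computation: $\widehat{\theta}_0$ is, up to sign, $\operatorname{ad}_{\theta_0}$, and after complexifying $T_pM$ and splitting it into the $\pm\sqrt{-1}$–eigenspaces of $\theta_0$, the endomorphism algebra decomposes into four $\operatorname{Hom}$–blocks on which $\widehat{\theta}_0$ acts by the scalars $0,0,2\sqrt{-1},-2\sqrt{-1}$; the two kernel blocks are precisely $g_{\theta_0}$ (complexified) and the remaining two give $\im\widehat{\theta}_0$, so $\ker$ and $\im$ are complementary. The factorization $t^3+4t=t(t-2\sqrt{-1})(t+2\sqrt{-1})$ is just the minimal-polynomial shadow of this; I prefer the direct computation of $\widehat{\theta}_0^3$ because it stays entirely over $\mathbb{R}$ and requires no case analysis.

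There is no real obstacle here: the whole argument is a two-step non-commutative computation. The only points needing care are the bookkeeping of the products when expanding $\widehat{\theta}_0^2$ and $\widehat{\theta}_0^3$, and the observation that one genuinely needs the cubic identity — equivalently, that $\widehat{\theta}_0^2=-4\,\id$ holds on $\im\widehat{\theta}_0$ but not on all of $T_pM\otimes T_p^*M$ — rather than any simpler relation.
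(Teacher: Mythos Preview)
Your argument is correct and rests on the same expansion the paper uses: both you and the paper compute $\widehat{\theta}_0^{\,2}(Y)=-2(Y+\theta_0Y\theta_0)$ for $X=\widehat{\theta}_0(Y)$ and deduce $X=0$ from $\widehat{\theta}_0(X)=0$. The only difference is packaging: the paper reads off $Y=-\theta_0Y\theta_0$, hence $Y\theta_0=\theta_0Y$ and $X=\widehat{\theta}_0(Y)=0$, whereas you apply $\widehat{\theta}_0$ once more and invoke the cubic identity $\widehat{\theta}_0^{\,3}=-4\,\widehat{\theta}_0$; this minimal-polynomial formulation (and your eigenspace remark) makes the semisimplicity of $\widehat{\theta}_0$ explicit, while the paper's version is a line shorter.
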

\begin{proof} Suppose $X\in\ker\widehat{\theta}_0\cap\im \widehat{\theta}_0$. Then $X\theta_0-\theta_0X=0$ and there is $Y\in TM\otimes T^*M$ such that $X=Y\theta_0-\theta_0Y$. It follows $0=(Y\theta_0-\theta_0Y)\theta_0-\theta_0(Y\theta_0-\theta_0Y)=-2(Y+\theta_0Y\theta_0)$. Hence $Y=-\theta_0Y\theta_0$. Therefore $\theta_0Y=Y\theta_0$, that is $Y\in\ker \widehat{\theta}_0$. This means that $X=0$.
\end{proof}
From this lemma, we get the direct sum decomposition
\begin{equation}\label{DrctSmDcmstn}
  T_pM\otimes T_p^*M=g_{\theta_0}\oplus\im\widehat{\theta}_0\,.
\end{equation}
Thus every $X\in T_pM\otimes T_p^*M$ can be uniquely decomposed in following way 
\begin{equation}\label{DrctSmDcmstn0}
  X=\frac{1}{2}(X-\theta_0X\theta_0)+\frac{1}{2}(X+\theta_0X\theta_0)\,,
\end{equation}
where $(1/2)(X-\theta_0X\theta_0)\in g_{\theta_0}$ and $(1/2)(X+\theta_0X\theta_0)\in\im\widehat{\theta}_0$. 
\begin{lemma}
Let $f$ be a local diffeomorphism of $M$ defined in a neighborhood of $p$. Then 
$$
  \im\widehat{f^{(0)}(\theta_0)}=f^{(0)}(\im \widehat{\theta}_0)\quad\text{and}\quad
  \ker\widehat{f^{(0)}(\theta_0)}=f^{(0)}(\ker \widehat{\theta}_0)
$$
\end{lemma}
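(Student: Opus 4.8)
The plan is to exploit the fact that the map $\widehat{\theta}_0$ is defined by an algebraic operation (conjugation by the endomorphism $\theta_0$ on the space of endomorphisms) together with the equivariance of the natural lift $f^{(0)}$ on the bundle $\tau$ of $(1,1)$-tensors. The key observation is that, in the identification of $\pi^{-1}(p)$ with endomorphisms of $T_pM$, the lifted diffeomorphism $f^{(0)}$ restricted to the fibre over $p$ acts by the tangent isomorphism $f_*:T_pM\to T_{f(p)}M$; more precisely, by formula \eqref{Lft0D} (the second line), $f^{(0)}$ sends $\theta_0\in T_pM\otimes T_p^*M$, viewed as an endomorphism of $T_pM$, to $f_*\circ\theta_0\circ f_*^{-1}$, an endomorphism of $T_{f(p)}M$. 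Denote this conjugation operator by $C_f(\psi)=f_*\circ\psi\circ f_*^{-1}$ for $\psi\in\mathrm{End}(T_pM)$; thus $f^{(0)}|_{\pi^{-1}(p)}=C_f$ under the endomorphism identification, and likewise $f^{(0)}$ acts on all of $T_pM\otimes T_p^*M\cong\mathrm{End}(T_pM)$ by $C_f$.

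First I would record the elementary algebraic identity governing how $\widehat{\cdot}$ transforms under conjugation. For $X\in\mathrm{End}(T_pM)$ we compute
$$
  C_f\bigl(\widehat{\theta}_0(X)\bigr)=C_f(X\theta_0-\theta_0X)=C_f(X)C_f(\theta_0)-C_f(\theta_0)C_f(X)=\widehat{C_f(\theta_0)}\bigl(C_f(X)\bigr),
$$
using that $C_f$ is an algebra homomorphism (it commutes with composition of endomorphisms). In other words $C_f\circ\widehat{\theta}_0=\widehat{C_f(\theta_0)}\circ C_f$ as maps $\mathrm{End}(T_pM)\to\mathrm{End}(T_{f(p)}M)$. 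Since $f^{(0)}(\theta_0)=C_f(\theta_0)$ by the previous paragraph, this is the identity $f^{(0)}\circ\widehat{\theta}_0=\widehat{f^{(0)}(\theta_0)}\circ f^{(0)}$.

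From this intertwining relation both claimed equalities are immediate, because $C_f$ (equivalently $f^{(0)}$ on the fibre) is a linear isomorphism. Applying both sides to all of $T_pM\otimes T_p^*M$ gives $\im\widehat{f^{(0)}(\theta_0)}=\im\bigl(f^{(0)}\circ\widehat{\theta}_0\bigr)=f^{(0)}\bigl(\im\widehat{\theta}_0\bigr)$, using surjectivity of $f^{(0)}$ onto the target fibre for the first equality. For the kernels, $j^1_pX\in\ker\widehat{f^{(0)}(\theta_0)}$ means $\widehat{f^{(0)}(\theta_0)}(\psi)=0$ where $\psi$ is the corresponding endomorphism; writing $\psi=f^{(0)}(\psi')$ for the unique preimage $\psi'$ under the isomorphism $f^{(0)}$, the intertwining identity gives $f^{(0)}\bigl(\widehat{\theta}_0(\psi')\bigr)=0$, hence $\widehat{\theta}_0(\psi')=0$ by injectivity, i.e. $\psi'\in\ker\widehat{\theta}_0$, so $\psi=f^{(0)}(\psi')\in f^{(0)}(\ker\widehat{\theta}_0)$; the reverse inclusion is the same computation read backwards. (Alternatively one may simply note $\ker\widehat{\theta}_0=g_{\theta_0}$ and $\ker\widehat{f^{(0)}(\theta_0)}=g_{f^{(0)}(\theta_0)}$, and invoke the evident equivariance $f^{(0)}(g_{\theta_0})=g_{f^{(0)}(\theta_0)}$ of isotropy algebras under the natural lift, which follows directly from definition \eqref{IztrpAlg0} and $(f\circ g)^{(0)}=f^{(0)}\circ g^{(0)}$.)

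There is no serious obstacle here; the only point requiring a little care is making explicit that $f^{(0)}$ on the fibre $\pi^{-1}(p)$ really is conjugation by $f_*$, so that it is an algebra homomorphism on $\mathrm{End}(T_pM)$ — this is exactly what the second line of \eqref{Lft0D} says once one rewrites $u^i_r u^r_s g^s_j$ in invariant terms as $f_*\circ(u^r_s)\circ f_*^{-1}$, and it is what makes the bracket $X\theta_0-\theta_0X$ transform correctly. Everything else is a two-line consequence of an isomorphism intertwining two linear maps.
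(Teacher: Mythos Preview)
Your proof is correct and follows essentially the same route as the paper: both establish the intertwining identity $f^{(0)}\circ\widehat{\theta}_0=\widehat{f^{(0)}(\theta_0)}\circ f^{(0)}$ via the algebra-homomorphism property of $f^{(0)}$ on endomorphisms, and then read off the image and kernel statements. You are somewhat more explicit than the paper in justifying why $f^{(0)}$ on the fibre is conjugation by $f_*$ (hence multiplicative), and in deducing both inclusions for the kernel; the paper only writes out one inclusion and leaves the rest implicit.
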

\begin{proof} We have 
$f^{(0)}\bigl(\widehat{\theta}_0(X)\bigr)=f^{(0)}(X\theta_0)-f^{(0)}(\theta_0X)
  =f^{(0)}(X)f^{(0)}(\theta_0)-f^{(0)}(\theta_0)f^{(0)}(X)=\widehat{f^{(0)}(\theta_0)}\bigl(f^{(0)}(X)\bigr)$.
This proves the first equality. Let $X\in \ker \widehat{\theta}_0$. Then
$\widehat{f^{(0)}(\theta_0)}\bigl(f^{(0)}(X)\bigr)=f^{(0)}(X)f^{(0)}(\theta_0)-f^{(0)}(\theta_0)f^{(0)}(X)
\\=f^{(0)}\bigl(\widehat{\theta_0}(X)\bigr)=0$.
This proves the second equality.
\end{proof}
This lemma means that $\theta_0$ generates direct sum decomposition \eqref{DrctSmDcmstn} in the natural way.

Consider the Spencer complex, see \eqref{SpncrCmplx}, generated by the algebra $T_pM\otimes T_p^*M$
\begin{multline}\label{SpncrCmplx1}
  0\xrightarrow{}T_pM\otimes(\odot^2T_p^*M)\hookrightarrow\\ 
  \hookrightarrow (T_pM\otimes T_p^*M)\otimes T_p^*M
  \xrightarrow{\partial_{1,1}}T_pM\otimes(\wedge^2 T_p^*M)\xrightarrow{} 0\,.
\end{multline}
It is easy to calculate that $\dim T_pM\otimes(\odot^2T_p^*M)=4n^3+2n^2$, $\dim (T_pM\otimes T_p^*M)\otimes T_p^*M=8n^3$, and 
$\dim T_pM\otimes(\wedge^2T_p^*M)=4n^3-2n^2$. It follows that 
\begin{equation}\label{Srjctn}
  \partial_{1,1}\bigl((T_pM\otimes T_p^*M)\otimes T_p^*M \bigr)=T_pM\otimes\wedge^2 T_p^*M\,.
\end{equation}
From \eqref{DrctSmDcmstn}, we get the following natural decomposition 
\begin{equation}\label{DrctSmDcmstn1}
  (T_pM\otimes T_p^*M)\otimes T_p^*M=(g_{\theta_0}\otimes T_p^*M)\oplus(\im\widehat{\theta}_0\otimes T_p^*M)\,.
\end{equation}
\begin{lemma}
 $$
   \partial_{1,1}(g_{\theta_0}\otimes T_p^*M)\cap\partial_{1,1}(\im\widehat{\theta}_0\otimes T_p^*M)=\{0\}
 $$ 
\end{lemma}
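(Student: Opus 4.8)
The plan is to show that the two subspaces $\partial_{1,1}(g_{\theta_0}\otimes T_p^*M)$ and $\partial_{1,1}(\im\widehat\theta_0\otimes T_p^*M)$ of $T_pM\otimes(\wedge^2T_p^*M)$ meet only in zero, by combining the decomposition \eqref{DrctSmDcmstn1} with a dimension count coming from the complex \eqref{SpncrCmplx1} and the surjectivity statement \eqref{Srjctn}. First I would observe that, since \eqref{SpncrCmplx1} is exact on the right, $\partial_{1,1}$ maps $(T_pM\otimes T_p^*M)\otimes T_p^*M$ onto $T_pM\otimes(\wedge^2T_p^*M)$, so by \eqref{DrctSmDcmstn1} the two images $\partial_{1,1}(g_{\theta_0}\otimes T_p^*M)$ and $\partial_{1,1}(\im\widehat\theta_0\otimes T_p^*M)$ together span the target, of dimension $4n^3-2n^2$. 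Hence it suffices to show that $\dim\partial_{1,1}(g_{\theta_0}\otimes T_p^*M)+\dim\partial_{1,1}(\im\widehat\theta_0\otimes T_p^*M)\le 4n^3-2n^2$, for then the spanning forces the sum to be direct.

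Next I would compute each of the two dimensions using the Spencer complexes attached to $g_{\theta_0}$ and to $T_pM\otimes T_p^*M$. From \eqref{SpncrCmplx} we have $\dim\partial_{1,1}(g_{\theta_0}\otimes T_p^*M)=\dim(g_{\theta_0}\otimes T_p^*M)-\dim g_{\theta_0}^{(1)}=4n^3-\dim g_{\theta_0}^{(1)}$. For the second summand I would argue that $\partial_{1,1}$ restricted to $\im\widehat\theta_0\otimes T_p^*M$ has kernel exactly $(\im\widehat\theta_0\otimes T_p^*M)\cap(T_pM\otimes\odot^2T_p^*M)$; since $g_{\theta_0}^{(1)}$ and the analogous prolongation $(T_pM\otimes\odot^2T_p^*M)$ of the full algebra sit inside $(g_{\theta_0}\otimes T_p^*M)$ and $((T_pM\otimes T_p^*M)\otimes T_p^*M)$ respectively, the decomposition \eqref{DrctSmDcmstn1} restricts to a decomposition $T_pM\otimes\odot^2T_p^*M=g_{\theta_0}^{(1)}\oplus\bigl((\im\widehat\theta_0\otimes T_p^*M)\cap(T_pM\otimes\odot^2T_p^*M)\bigr)$. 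From this, $\dim\bigl((\im\widehat\theta_0\otimes T_p^*M)\cap(T_pM\otimes\odot^2T_p^*M)\bigr)=(4n^3+2n^2)-\dim g_{\theta_0}^{(1)}$, and therefore $\dim\partial_{1,1}(\im\widehat\theta_0\otimes T_p^*M)=\dim(\im\widehat\theta_0\otimes T_p^*M)-\bigl((4n^3+2n^2)-\dim g_{\theta_0}^{(1)}\bigr)$. Since $\dim\im\widehat\theta_0=4n^2-2n^2=2n^2$, we get $\dim(\im\widehat\theta_0\otimes T_p^*M)=4n^3$, so this second dimension equals $\dim g_{\theta_0}^{(1)}-2n^2$. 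Adding the two, $\bigl(4n^3-\dim g_{\theta_0}^{(1)}\bigr)+\bigl(\dim g_{\theta_0}^{(1)}-2n^2\bigr)=4n^3-2n^2$, which matches the dimension of the target exactly, giving directness.

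The main obstacle I anticipate is justifying that the Koszul-type decomposition \eqref{DrctSmDcmstn1} genuinely restricts to the symmetric prolongation, i.e. that $T_pM\otimes\odot^2T_p^*M=g_{\theta_0}^{(1)}\oplus\bigl((\im\widehat\theta_0\otimes T_p^*M)\cap(T_pM\otimes\odot^2T_p^*M)\bigr)$. This follows because the projection $(T_pM\otimes T_p^*M)\otimes T_p^*M\to g_{\theta_0}\otimes T_p^*M$ along $\im\widehat\theta_0\otimes T_p^*M$ is, by \eqref{DrctSmDcmstn0}, the map acting on the first $T_p^*M$-slot only, hence it commutes with symmetrization in the two $T_p^*M$ factors and so preserves $T_pM\otimes\odot^2T_p^*M$; an element of the symmetric space therefore decomposes into its $g_{\theta_0}$-part, which is automatically symmetric and thus lies in $g_{\theta_0}^{(1)}$, and a remainder lying in $\im\widehat\theta_0\otimes T_p^*M$ and still symmetric. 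Once this is in place the dimension bookkeeping above closes the argument; I would also remark that all the constructions are natural under $f^{(0)}$ by the preceding lemma, so the resulting complement is natural as well.
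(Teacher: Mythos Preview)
Your argument is a dimension-count repackaging of the paper's element chase; both reduce to the single claim that the splitting \eqref{DrctSmDcmstn} passes to first prolongations,
\[
T_pM\otimes\odot^2T_p^*M=(g_{\theta_0})^{(1)}\oplus(\im\widehat\theta_0)^{(1)}.
\]
The paper simply asserts this identity and then chases an element; you try to justify it and then count dimensions. So the two approaches stand or fall together on that one step, and that is precisely where your proposal has a gap.

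The projection $X\mapsto\tfrac12(X-\theta_0 X\theta_0)$ of \eqref{DrctSmDcmstn0} does \emph{not} act on one $T_p^*M$ slot only: for a decomposable $X=v\otimes\varphi\in T_pM\otimes T_p^*M$ one has $\theta_0 X\theta_0=(\theta_0 v)\otimes(\theta_0^*\varphi)$, so both the $T_pM$ factor and the first covariant factor are twisted by $\theta_0$. Hence there is no reason for the induced projection on $(T_pM\otimes T_p^*M)\otimes T_p^*M$ to commute with symmetrization in the two covariant slots; concretely, symmetry of the projected tensor would require $h(v)(\theta_0 w)=h(\theta_0 v)(w)$ for all $v,w$, which does not follow from $h(v)(w)=h(w)(v)$. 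The claimed prolongation splitting is in fact false for $n=1$: there $\dim(g_{\theta_0})^{(1)}=\dim(\im\widehat\theta_0)^{(1)}=2$ while $\dim\bigl(T_pM\otimes\odot^2T_p^*M\bigr)=6$, and correspondingly your formula $\dim\partial_{1,1}(\im\widehat\theta_0\otimes T_p^*M)=\dim g_{\theta_0}^{(1)}-2n^2$ gives $0$, whereas a direct check shows that $\partial_{1,1}(\im\widehat\theta_0\otimes T_p^*M)$ is all of the $2$-dimensional target. Since your justification is insensitive to $n$, it cannot be correct as written; the splitting of the symmetric tensors needs a different argument, and you should keep in mind that \S4.2 is only asserted for $n>1$.
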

\begin{proof} Suppose $Z\in\partial_{1,1}(g_{\theta_0}\otimes T_p^*M)\cap\partial_{1,1}(\im\widehat{\theta}_0\otimes T_p^*M)$. Then there are $X\in g_{\theta_0}\otimes T_p^*M$ and $Y\in\im\widehat{\theta}_0\otimes T_p^*M$ so that $\partial_{1,1}(X)=\partial_{1,1}(Y)=Z$, that is $\partial_{1,1}(X-Y)=0$. From \eqref{SpncrCmplx1}, this means that $X-Y\in T_pM\otimes(\odot^2T_p^*M)$. Taking into account \eqref{DrctSmDcmstn}, we get
$$
  T_pM\otimes(\odot^2T_p^*M)=(T_pM\otimes T_p^*M)^{(1)}=(g_{\theta_0}\oplus\im\widehat{\theta}_0)^{(1)}
  =(g_{\theta_0})^{(1)}\oplus(\im\widehat{\theta}_0)^{(1)}
$$
Hence there are $X_1\in(g_{\theta_0})^{(1)}$ and $Y_1\in (\im\widehat{\theta}_0)^{(1)}$ so that $X-Y=X_1+Y_1$, that is $X-X_1=Y-Y_1$. Taking into account that $X-X_1,Y-Y_1\in (T_pM\otimes T_p^*M)\otimes T_p^*M$, $\im(X-X_1)\subset g_{\theta_0}$, $\im(Y-Y_1)\subset\im\widehat{\theta}_0$, and \eqref{DrctSmDcmstn}, we obtain that $X-X_1=Y-Y_1=0$, that is $X=X_1$ and $Y=Y_1$. It follows $Z=\partial_{1,1}(X_1)=\partial_{1,1}(Y_1)=0$. This completes the prove.
\end{proof}

Now from \eqref{DrctSmDcmstn1}, \eqref{Srjctn}, and this lemma, we get the following natural decomposition
\begin{equation}\label{DrctSmDcmstn2}
  T_pM\otimes(\wedge^2T_p^*M)
  =\partial_{1,1}(g_{\theta_0}\otimes T_p^*M)\oplus\partial_{1,1}(\im\widehat{\theta}_0\otimes T_p^*M)\,.
\end{equation}

\subsection{The invariant 2--form}
From \eqref{DrctSmDcmstn2}, we get that for every $\theta_1\in J^1\pi$, there exists a unique representative $\omega_{\theta_1}\in\chi(\theta_1)$ such that 
$$
  \omega_{\theta_1}\in\partial_{1,1}(\im\widehat{\theta}_0\otimes T_p^*M)\,.
$$
It follows from state above
\begin{proposition}
  The field of 2--forms 
 $$
  \omega: \theta_1\longmapsto\omega_{\theta_1}
 $$
 on $J^1\pi$ is a 1-st order differential invariant of the action of $\Gamma$ on $\pi$.
\end{proposition}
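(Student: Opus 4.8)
The plan is to deduce the statement from two facts that are already established: that the class field $\chi$ is a first-order differential invariant, and that decomposition \eqref{DrctSmDcmstn2} is natural with respect to lifted diffeomorphisms. Fix a local diffeomorphism $f$ of $M$ defined near $p$ with $\theta_1$ in the domain of $f^{(1)}$; put $\theta_0=\pi_{1,0}(\theta_1)$ and $\theta'_0=f^{(0)}(\theta_0)=\pi_{1,0}\bigl(f^{(1)}(\theta_1)\bigr)$ (the latter by commutativity of the projection diagrams). Let $\Phi_0=f^{(0)}|_{T_pM\otimes T_p^*M}$, which by \eqref{Lft0D} is the conjugation $\theta\mapsto f_*\theta f_*^{-1}$, and let $\Phi$ and $\widetilde\Phi$ be the natural lifts of $f_*$ to $T_pM\otimes(\wedge^2T_p^*M)$ and to $(T_pM\otimes T_p^*M)\otimes T_p^*M$ respectively ($f_*$ on each tangent factor, $(f^{-1})^*$ on each cotangent factor); $\Phi$ is the map implicit in \eqref{TrnsfrmOmgH}. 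It suffices to prove $\Phi(\omega_{\theta_1})=\omega_{f^{(1)}(\theta_1)}$.

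First I would check that these lifts intertwine the maps of the Spencer complexes \eqref{SpncrCmplx} and \eqref{SpncrCmplx1}: substituting into $\partial_{1,1}(h)(X,Y)=h(X)(Y)-h(Y)(X)$ gives at once $\Phi\circ\partial_{1,1}=\partial_{1,1}\circ\widetilde\Phi$. Next, the lemma asserting $\im\widehat{f^{(0)}(\theta_0)}=f^{(0)}(\im\widehat{\theta}_0)$ and $\ker\widehat{f^{(0)}(\theta_0)}=f^{(0)}(\ker\widehat{\theta}_0)$, i.e.\ $\widehat{\theta'_0}=\Phi_0\circ\widehat{\theta}_0\circ\Phi_0^{-1}$, shows that $\Phi_0$ carries $g_{\theta_0}$ onto $g_{\theta'_0}$ and $\im\widehat{\theta}_0$ onto $\im\widehat{\theta'_0}$; tensoring with the cotangent factor, $\widetilde\Phi$ carries $g_{\theta_0}\otimes T_p^*M$ onto $g_{\theta'_0}\otimes T^*_{f(p)}M$ and $\im\widehat{\theta}_0\otimes T_p^*M$ onto $\im\widehat{\theta'_0}\otimes T^*_{f(p)}M$. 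Composing with the first display, $\Phi$ maps the summand $\partial_{1,1}(\im\widehat{\theta}_0\otimes T_p^*M)$ of \eqref{DrctSmDcmstn2} onto the analogous summand at $\theta'_0$, and likewise the $\partial_{1,1}(g_{\theta_0}\otimes T_p^*M)$ summand; so $\Phi$ respects decomposition \eqref{DrctSmDcmstn2}.

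Now I would invoke the proposition that $\chi$ is a first-order differential invariant, in the form \eqref{TrnsfrmOmgH}: for a horizontal $H\subset\A_{\theta_1}$ one has $\Phi(\omega_H)=\omega_{j_p^2f(H)}$ with $j_p^2f(H)$ horizontal in $\A_{f^{(1)}(\theta_1)}$, and together with the previous paragraph this gives $\Phi\bigl(\chi(\theta_1)\bigr)=\chi\bigl(f^{(1)}(\theta_1)\bigr)$. Hence $\Phi(\omega_{\theta_1})$ is a representative of $\chi\bigl(f^{(1)}(\theta_1)\bigr)$ that, again by the previous paragraph, lies in $\partial_{1,1}(\im\widehat{\theta'_0}\otimes T^*_{f(p)}M)$. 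Since $\omega_{f^{(1)}(\theta_1)}$ is by definition the unique such representative, $\Phi(\omega_{\theta_1})=\omega_{f^{(1)}(\theta_1)}$; unwinding $\Phi$ on tangent vectors this is precisely $f_*\bigl(\omega_{\theta_1}(X_p,Y_p)\bigr)=\omega_{f^{(1)}(\theta_1)}\bigl(f_*X_p,f_*Y_p\bigr)$, so $\omega$ is a first-order differential invariant of the action of $\Gamma$ on $\pi$. (When $n=1$ the group $H^{0,2}(g_{\theta_0})$ is trivial, so $\omega\equiv0$ and the claim is vacuous.)

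The only delicate point is the bookkeeping in the first paragraph: one must make sure that the single natural lift $\Phi$ (with its companions $\Phi_0$ and $\widetilde\Phi$) simultaneously intertwines $\partial_{1,1}$, the hat operator $\widehat{\theta}_0$, and the transformation law \eqref{TrnsfrmOmgH}. Once these compatibilities are in place, the conclusion is a formal consequence of the uniqueness built into decomposition \eqref{DrctSmDcmstn2}, and no further computation is needed.
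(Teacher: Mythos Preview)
Your argument is correct and follows exactly the route the paper intends: the paper itself offers no proof beyond ``It follows from state above,'' meaning the invariance of $\chi$ (via \eqref{TrnsfrmOmgH}) together with the naturality of decomposition \eqref{DrctSmDcmstn2} established by the lemma on $\im\widehat{f^{(0)}(\theta_0)}$ and $\ker\widehat{f^{(0)}(\theta_0)}$. You have simply made explicit the bookkeeping with $\Phi_0,\widetilde\Phi,\Phi$ and the uniqueness step that the paper leaves to the reader.
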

 
Let us calculate $\omega$ in standard coordinates. Suppose $\theta_1=(x^q,\,u^i_j,\,u^i_{j,r})$. Then from \eqref{EqIstrpSpc}, we get that a horizontal subspace $H=\{\,(X^i,\,h^i_{j,r}X^r)\,\}$ of $\A_{\theta_1}$ is described by the equation
\begin{equation}\label{HrzntSbspc}
  -u^i_{j,k}+ h^i_{r,k}\,u^r_j-u^i_r\,h^r_{j,k}=0\,.
\end{equation}
We have that $(h^i_{j,r})\in (T_pM\otimes T_p^*M)\otimes T_p^*M$. From \eqref{DrctSmDcmstn0} and \eqref{DrctSmDcmstn1}, we obtain that the natural projection of $(h^i_{j,r})$ on $\im\widehat{\theta}_0\otimes T_p^*M$ along  $g_{\theta_0}\otimes T_p^*M$ is $\frac{1}{2}(h^i_{j,k}+u^i_sh^s_{r,k}u^r_j)$. From \eqref{HrzntSbspc}, we get that 
$\frac{1}{2}(h^i_{j,k}+u^i_sh^s_{r,k}u^r_j)=-\frac{1}{2}u^i_{r,k}u^r_j$. Therefore $\partial_{1,1}\bigl((-\frac{1}{2}u^i_{r,k}u^r_j)\bigr)=\frac{1}{2}(-u^i_{r,k}u^r_j+u^i_{r,j}u^r_k)$. Thus in standard coordinates
\begin{equation}\label{0mg}
  \omega=\frac{1}{2}(u^i_{r,j}u^r_k-u^i_{r,k}u^r_j)\frac{\partial}{\partial x^i}\otimes(dx^j\wedge dx^k)\,.
\end{equation}

Let $S$ be a section of $\pi$. It generates the section $j_1S$ of the bundle $\pi_1$ by the formula 
$$
  j_1S:p\mapsto j_p^1S\,,\quad\forall\,p\in M\,.
$$ 
By $\omega |_{j_1S}$ we denote the restriction of $\omega$ to the image of $j_1S$. This restriction can be considered as an invariant vector--valued 2--form on $M$. From \eqref{0mg}, we get that 
\begin{equation}\label{Omg1}
  (\omega |_{j_1S})^i_{jk}=\frac{1}{2}(S^i_{r,j}\,S^r_k - S^i_{r,k}\,S^r_j)\,,
\end{equation}
where $S^i_{j,k}=\partial S^i_j/\partial x^k$. By $N$ we denote the Nijenhuise tensor of the almost--complex structure $S$. Recall, see \cite{KbshNmz}, that 
\begin{equation}\label{Nnhs}
  N^i_{jk}
  =2(S^i_{k,r}\,S^r_j-S^i_{j,r}\,S^r_k-S^r_{k,j}\,S^i_r+S^r_{j,k}\,S^i_r)\,.
\end{equation} 
\begin{theorem} 
 $$
   N=0\quad\text{iff}\quad\omega|_{j_1S}=0\,.
 $$
\end{theorem}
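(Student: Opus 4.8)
The plan is to prove the equivalence by a direct computation in the standard coordinates, working only with the two explicit formulas \eqref{Omg1} and \eqref{Nnhs} and the constraint \eqref{J1pi} coming from $S^2=-\id$. Throughout write $S^i_{j,k}=\partial S^i_j/\partial x^k$, so that $(\omega|_{j_1S})^i_{jk}=\tfrac12(S^i_{r,j}S^r_k-S^i_{r,k}S^r_j)$.

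The first step is to rewrite the Nijenhuis tensor. Differentiating $S^i_rS^r_j=-\delta^i_j$ gives the second relation of \eqref{J1pi}, i.e. $S^i_rS^r_{j,k}=-S^i_{r,k}S^r_j$; substituting this into the third and fourth summands of \eqref{Nnhs} turns $-S^r_{k,j}S^i_r$ into $S^i_{r,j}S^r_k$ and $S^r_{j,k}S^i_r$ into $-S^i_{r,k}S^r_j$, whence
\[
  N^i_{jk}=2\bigl(S^r_j\,S^i_{k,r}-S^r_k\,S^i_{j,r}\bigr)+2\bigl(S^i_{r,j}S^r_k-S^i_{r,k}S^r_j\bigr)
         =2\,R^i_{jk}+4\,(\omega|_{j_1S})^i_{jk},
\]
where $R^i_{jk}:=S^r_j\,S^i_{k,r}-S^r_k\,S^i_{j,r}$ is the ``$S$--twisted'' vector-valued $2$--form. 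Next, contracting the two form indices with $S$ and using $S^r_bS^b_k=-\delta^r_k$, one obtains the two identities $S^a_jS^b_k(\omega|_{j_1S})^i_{ab}=-\tfrac12 R^i_{jk}$ and $S^a_jS^b_kR^i_{ab}=-2\,(\omega|_{j_1S})^i_{jk}$. Writing $(\mathcal J\alpha)^i_{jk}:=S^a_jS^b_k\alpha^i_{ab}$, the relation $S^2=-\id$ gives $\mathcal J^2=\id$, the two identities are mutually consistent, and combining them with the boxed formula yields the compact relation $N=4(\id-\mathcal J)\bigl(\omega|_{j_1S}\bigr)$. In particular $\omega|_{j_1S}=0\Rightarrow R=0\Rightarrow N=0$, which settles the ``if'' direction; it also shows $N$ is $8$ times the $(-1)$--eigencomponent of $\omega|_{j_1S}$ under $\mathcal J$ (equivalently, the part of $\omega|_{j_1S}$ of type $(2,0)+(0,2)$ with respect to $S$).

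For the converse, $N=0$ forces $\mathcal J\bigl(\omega|_{j_1S}\bigr)=\omega|_{j_1S}$, so it remains to see that $\omega|_{j_1S}$ can never have a nonzero $\mathcal J$--invariant (type $(1,1)$) part, i.e. that $\omega_{\theta_1}$ always lies in the $(-1)$--eigenspace of $\mathcal J$. This is exactly where the special choice of representative enters: $\omega_{\theta_1}$ was defined as the unique representative of $\chi(\theta_1)$ in the natural Spencer complement $\partial_{1,1}(\im\widehat\theta_0\otimes T_p^*M)$ of \eqref{DrctSmDcmstn2}, and the point is to check, from the decomposition \eqref{DrctSmDcmstn0}–\eqref{DrctSmDcmstn1} used to derive \eqref{0mg}, that $\mathcal J$ acts as $-\id$ on $\partial_{1,1}(\im\widehat\theta_0\otimes T_p^*M)$; then $\mathcal J(\omega|_{j_1S})=\omega|_{j_1S}$ and $\mathcal J(\omega|_{j_1S})=-\omega|_{j_1S}$ together give $\omega|_{j_1S}=0$. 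I expect this last identification — pinning down the action of $\mathcal J$ on the Spencer complement $\partial_{1,1}(\im\widehat\theta_0\otimes T_p^*M)$ and confirming it is $-\id$ — to be the real obstacle; everything else above is routine index manipulation using only \eqref{J1pi}. The case $n=1$ is handled separately: there $H^{0,2}(g_{\theta_0})$ is trivial by the Proposition, so $\omega|_{j_1S}\equiv 0$, and $N\equiv 0$ as well, so the statement is vacuously true.
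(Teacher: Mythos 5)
Your algebraic reduction is correct as far as it goes: using the differentiated relation in \eqref{J1pi} one does get $N=2R+4\,\omega|_{j_1S}$ with $R^i_{jk}=S^r_jS^i_{k,r}-S^r_kS^i_{j,r}$, the contraction identities $\mathcal J\omega=-\tfrac12R$, $\mathcal JR=-2\omega$ hold, and hence $N=4(\id-\mathcal J)(\omega|_{j_1S})=8\omega_-$. This settles $\omega|_{j_1S}=0\Rightarrow N=0$, in fact more transparently than the paper's computation in adapted coordinates at a point.

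The converse is where the proposal fails, and the step you flag as ``the real obstacle'' is not just unproved but false: $\mathcal J$ does \emph{not} act as $-\id$ on $\partial_{1,1}(\im\widehat{\theta}_0\otimes T_p^*M)$, i.e.\ $\omega_{\theta_1}$ need not lie in the $(-1)$--eigenspace. Take $n\ge2$, coordinates with $S(p)=J_0=\begin{pmatrix}0&-I\\I&0\end{pmatrix}$, and the point $\theta_1\in J^1\pi$ with
\begin{equation*}
  \partial_1S(p)=A,\qquad \partial_{n+1}S(p)=J_0A,\qquad \partial_kS(p)=0\ \ (k\neq1,n+1),
\end{equation*}
where $A$ is any nonzero matrix anticommuting with $J_0$; these data satisfy \eqref{J1pi}, so $\theta_1=j^1_pS$ for some almost-complex structure $S$. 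Writing $A_X:=\partial_XS(p)$, formula \eqref{Nnhs} reads $N(X,Y)=2\bigl(A_{J_0X}Y-A_{J_0Y}X-J_0A_XY+J_0A_YX\bigr)$, and since here $A_{J_0X}=J_0A_X$ for all $X$, one gets $N(p)=0$; on the other hand \eqref{Omg1} gives $(\omega|_{j_1S})^i_{12}(p)=\tfrac12(AJ_0)^i_2$, which is nonzero for suitable $A$ (e.g.\ $A=\begin{pmatrix}P&0\\0&-P\end{pmatrix}$ with $P^1_2=1$ yields $(\omega|_{j_1S})^{n+1}_{12}(p)=-\tfrac12$). So this $\omega_{\theta_1}$ is a nonzero element of the canonical complement with $\mathcal J\omega_{\theta_1}=+\omega_{\theta_1}$, refuting your claimed identification; worse, it shows the pointwise implication $N(p)=0\Rightarrow\omega|_{j_1S}(p)=0$ is false — in $N=8\omega_-$ the $(+1)$--component $\omega_+$ is simply not seen by $N$. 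Consequently no index manipulation at a single $1$--jet can close this direction: one must use that $N$ vanishes on a whole neighborhood. That is exactly what the paper does via the Newlander--Nirenberg theorem — integrability provides coordinates in which $S$ is the constant matrix $J_0$, so \eqref{Omg1} gives $\omega|_{j_1S}=0$, and since $\omega|_{j_1S}$ is an invariantly defined tensor this holds in every coordinate system. Some input of this kind is unavoidable and is absent from your argument (your $n=1$ remark is fine but peripheral).
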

\begin{proof} Let $p$ be an arbitrary point of the domain of $S$. 

Suppose $N=0$. Then in a neighborhood of $p$, there exist local coordinates $x^1,\,\ldots,\,x^{2n}$ so that $S(x^1,\ldots,x^{2n})=\begin{pmatrix}0&-I\\I& 0\end{pmatrix}$, see \cite{NwlndrNrnbrg}. Now by \eqref{Omg1}, we get that $\omega|_{j_1S}=0$.

Let $\omega|_{j_1S}=0$. Check local coordinates in a a neighborhood of $p$ such that $S(p)=\begin{pmatrix}0&-I\\I& 0\end{pmatrix}$. This means that 
$$
  S^i_j(p)=\left\{
  \begin{aligned}
  -1&\quad\text{if}\quad i+j=2n+1\quad\text{and}\quad i\leq n\,, \\
   1&\quad\text{if}\quad i+j=2n+1\quad\text{and}\quad i\geq n+1\,, \\
   0&\quad\text{if}\quad i+j\neq 2n+1\,.
  \end{aligned}\right.
$$
It follows from \eqref{Nnhs} that
\begin{multline*}
  N^i_{jk}(p)
  =2(S^i_{k,2n+1-j}S^{2n+1-j}_j - S^i_{j,2n+1-k}S^{2n+1-k}_k\\
    -S^{2n+1-i}_{k,j}S^i_{2n+1-i} + S^{2n+1-i}_{j,k}S^i_{2n+1-i})\,,
\end{multline*}
where there is no summation over repeated indexes of the form $2n+1-s$.

Show that the term $2(S^i_{k,2n+1-j}S^{2n+1-j}_j - S^i_{j,2n+1-k}S^{2n+1-k}_k)$ of $N^i_{jk}(p)$ is zero.
From \eqref{Omg1}, we get
\begin{multline*}
  (\omega |_{j_1S})^i_{2n+1-j\,2n+1-k}(p)=\frac{1}{2}(S^i_{k,2n+1-j}S^k_{2n+1-k} 
   - S^i_{j,2n+1-k}S^j_{2n+1-j})\\
   =-\frac{1}{2}(S^i_{k,2n+1-j}S_k^{2n+1-k} - S^i_{j,2n+1-k}S_j^{2n+1-j})
\end{multline*}
We have that either $S_k^{2n+1-k}=S_j^{2n+1-j}$ or $S_k^{2n+1-k}=-S_j^{2n+1-j}$. It follows that the considered term of $N^i_{jk}(p)$ is equal to $\pm4(\omega |_{j_1S})^i_{2n+1-j\,2n+1-k}(p)$. This means that this term of $N^i_{jk}(p)$ is equal to zero. 

Show that the last term $2\bigl((S^{2n+1-i}_{j,k}-S^{2n+1-i}_{k,j})S^i_{2n+1-i}\bigr)$ of $N^i_{jk}(p)$ is zero.
From \eqref{Omg1}, we get
\begin{multline*}
  (\omega |_{j_1S})^{2n+1-i}_{j\,k}(p)=\frac{1}{2}(S^{2n+1-i}_{2n+1-k,j}S_k^{2n+1-k} 
   - S^{2n+1-i}_{2n+1-j,k}S_j^{2n+1-j})\\
   =\frac{1}{2}(S^{2n+1-i}_{r,j}S_k^r - S^{2n+1-i}_{r,k}S_j^r)\,,
\end{multline*}   
where there is summation over the repeated index $r$. Taking into account \eqref{J1pi}, we obtain
\begin{multline*}
   \frac{1}{2}(S^{2n+1-i}_{r,j}S_k^r - S^{2n+1-i}_{r,k}S_j^r)
   =-\frac{1}{2}(S^{2n+1-i}_rS_{k,j}^r - S^{2n+1-i}_rS_{j,k}^r)\\
   =-\frac{1}{2}(S^{2n+1-i}_iS_{k,j}^i - S^{2n+1-i}_iS_{j,k}^i)
   =-\frac{1}{2}(S_{k,j}^i - S_{j,k}^i)S^{2n+1-i}_i
\end{multline*}
Therefore $S_{k,j}^i - S_{j,k}^i=0$ for all $i,j,k$. It follows that the second considered term of $N^i_{jk}(p)$ is equal to zero.
\end{proof}


\end{document}